\documentclass[11pt,reqno]{amsart}
\usepackage{graphicx}
\usepackage{amssymb,amsmath}
\usepackage{amsthm}
\usepackage{color,graphicx}
\usepackage{hyperref}
\usepackage{color}
\usepackage{epstopdf}
\usepackage{caption}
\usepackage{subcaption}
\usepackage[dvips]{epsfig}

\setlength{\textwidth}{15cm} \setlength{\textheight}{22 cm}
\addtolength{\oddsidemargin}{-1.5cm} \addtolength{\evensidemargin}{-1.5cm}

\newcommand{\be}{\begin{equation}}
\newcommand{\ee}{\end{equation}}

\newcommand{\dn}{{\rm \,dn}}

\newcommand{\R}{{\mathbb R}}

\newcommand{\ve}{{\varepsilon}}


\numberwithin{equation}{section}
\numberwithin{figure}{section}

\newtheorem{theorem}{Theorem}[section]
\newtheorem{proposition}[theorem]{Proposition}

\newtheorem{lemma}[theorem]{Lemma}
\newtheorem{corollary}[theorem]{Corollary}
\newtheorem{definition}[theorem]{Definition}


\begin{document}
\vglue-1cm \hskip1cm
\title[Modified Camassa-Holm Equation]{A comment about the paper On the instability of elliptic traveling wave solutions of the modified Camassa-Holm equation}

\begin{center}

\subjclass[2000]{76B25, 35Q51, 35Q53.}

\keywords{Orbital stability, modified Camassa-Holm equation, periodic traveling waves}

\maketitle

{\bf Renan H. Martins}

{Departamento de Matem\'atica - Universidade Estadual de Maring\'a\\
Avenida Colombo, 5790, CEP 87020-900, Maring\'a, PR, Brazil.}\\
{r3nan$\_$s@hotmail.com}

{\bf F\'abio Natali}

{Departamento de Matem\'atica - Universidade Estadual de Maring\'a\\
Avenida Colombo, 5790, CEP 87020-900, Maring\'a, PR, Brazil.}\\
{fmanatali@uem.br}

\end{center}

\begin{abstract}
We discuss the recent paper by A. Dar\'os and L.K. Arruda (On the instability of elliptic traveling wave solutions of the modified Camassa–Holm equation, J. Diff. Equat., 266 (2019), 1946-1968). Our intention is to correct some imperfections left by the authors and present the orbital stability of periodic snoidal waves in the energy space.
\end{abstract}

\section{Introduction}
The main goal of this note is to present a different result as determined in \cite{DLK} concerning the orbital stability of smooth periodic waves associated with the modified Camassa-Holm equation given by
\be\label{mCH}
u_t-u_{txx}=uu_{xxx}+2u_xu_{xx}-3u^2u_{x},
\ee
where $u:\mathbb{R}\times\R\to\R$ is a real function which is $L-$periodic at space variable.\\
\indent Formally, equation $(\ref{mCH})$ admits the conserved quantities
\begin{equation}\label{Eu}
E(u)=-\int_{0}^{L}\left[\frac{u^4}{4}+\frac{uu_x^2}{2}\right]dx,
\end{equation}

\begin{equation}\label{Fu}
F(u)=\frac{1}{2}\int_{0}^{L} u^2+u_x^2dx,
\end{equation}
and 
\begin{equation}\label{Vu}
V(u)=\int_{0}^{L} udx.
\end{equation}

A smooth periodic traveling wave solution for \eqref{mCH} is a solution of the form $u(x,t)=\phi(x-c t)$, where $c$ is a positive real constant representing the wave speed and $\phi:\mathbb{R}\to\R$ is a smooth $L-$periodic function satisfying 
$\phi^{(n)}(x+L)=\phi^{(n)}(x)$ for all $n\in\mathbb{N}$. Substituting this form into (\ref{mCH}), we obtain 
\begin{equation}\label{ode-wave}
(\phi-c)\phi''+\frac{\phi'^2}{2}-\phi^3+c\phi=A
\end{equation}
where $\phi_{c}:=\phi$ and $A_c:=A$ depend both on $c>0$. $A$ is a constant of integration.

In view of the conserved quantities $(\ref{Eu})$, $(\ref{Fu})$ and $(\ref{Vu})$, we may define the augmented Lyapunov functional,
\begin{equation}\label{lyafun}
G(u)=E(u)+cF(u)-AV(u),
\end{equation}
and the symmetric linearized operator around the wave $\phi$ expressed by
\begin{equation}\label{operator}
\mathcal{L}=G''(\phi)=(\phi-c)\partial_x^2+\phi'\partial_x+c-3\phi^2+\phi''.
\end{equation}
In addition, it is clear from $(\ref{ode-wave})$ that $G'(\phi)=0$.\\
\indent In \cite{DLK} the authors have been established the existence of periodic waves with the zero mean property associated with the equation $(\ref{ode-wave})$. They put forwarded an explicit solution given in terms of the Jacobi Elliptic Function with \textit{snoidal} type given by
\begin{equation}\label{snoidal1}
\phi(x)=\alpha+\beta{\rm sn}^2\left(\frac{2K(k)x}{L};k\right),
\end{equation}
where $\alpha$ and $\beta$ are smooth functions depending on the period $L>0$ (which needs to be large enough) and the modulus $k\in(0,1)$. Here $K:=K(k)$ represents the complete elliptic integral of first kind.\\
\indent The results contained in \cite[Theorem 2]{DLK} do not bring any mention if the periodic wave has zero mean. In addition, the authors should use the implicit function theorem taking account this property in order to prove the existence of a smooth curve of periodic waves. This fact has been determined first in \cite{ABS} where the authors  constructed snoidal periodic waves with zero mean and depending smoothly on the wave speed $c$ for the standard Korteweg-de Vries equations (in fact, they constructed cnoidal periodic waves, but to get the elliptic function depending on snoidal it makes necessary to use the basic equality $sn^2+cn^2=1$).\\
\indent  On the other hand, the standard equality concerning Jacobi Elliptic Functions given by $k^2sn^2+dn^2=1$ can be used to deduce from $(\ref{snoidal1})$, a convenient solution given in terms of the Jacobi Elliptic Function with \textit{dnoidal} type as, 
\begin{equation}\label{dnoidal1}
\phi(x)=a+b\left({\rm dn}^2\left(\frac{2K(k)x}{L};k\right)-\frac{E(k)}{K(k)}\right),
\end{equation}
where $E$ is the complete elliptic integral of second kind. The main advantage of the formula $(\ref{dnoidal1})$ is that 
$\frac{1}{L}\int_{0}^L\phi dx=a.$\\
\indent Substituting the solution $(\ref{dnoidal1})$ (or equivalently, $(\ref{snoidal1})$) in $(\ref{ode-wave})$, we obtain thanks to the terms $\phi'^2$ and $\phi^3$, a complicated equation given in short by
\begin{equation}\label{powerSN}
\sum_{i=0}^{3}f_i(k,a,b,L,c,A){\dn}^{2i}\left(\frac{2K(k)x}{L};k\right)=0,
\end{equation}
where $f_i$, $i=0,1,2,3$, are smooth functions depending on the variables $k,b,a,L,c$, and $A$. Using again the equality $k^2sn^2+dn^2=1$, we get a similar equality as in $(\ref{powerSN})$ with $sn$ instead of $dn$. Since our intention is to get a smooth curve of periodic waves depending on the modulus $k\in(0,1)$ with fixed period $L>0$, we need to consider $f_i\equiv0$, $i=0,1,2,3$, to get $a,b,c$, and $A$ in terms of $k$ and $L$. This can be done and we have
\be\label{a}
\begin{array}{llll}a&=&\displaystyle-\frac{1}{3L^2}\left[-32(2-k^2)K(k)^2+96E(k)K(k)
	+\frac{3}{2}L^2\right.\\\\
&-&	\displaystyle\left.\frac{1}{2}\sqrt{9L^4-2048K(k)^4+2048K(k)^4k^2-2048K(k)^4k^4}\right],
\end{array}
\ee

\be\label{b}
b=-\frac{32K(k)^2}{L^2},
\ee
and

\be\label{c}
c=\frac{\frac{3}{2}L^2-\frac{1}{2}\sqrt{9L^4-2048K(k)^4+2048K(k)^4k^2-2048K(k)^4k^4}}{L^2}.
\ee
The common term present in the square root appearing in equalities $(\ref{a})$ and $(\ref{c})$ gives us that the period $L$ needs to be considered large enough. In addition, the value of $c$ in $(\ref{c})$ is the same as in \cite{DLK}.\\
\indent It is clear from $(\ref{a})$ that if $\phi$ enjoys the zero mean property, one sees that $a=0$ and, in this case, $L$ can be seen as an implicit function in terms of the modulus $k\in(0,1)$. When the period is a function depending on the modulus $k$ (as a consequence, $L$ depends on the wave speed $c$), we get additional difficulties to apply classical arguments as in \cite{bona2}, \cite{bona}, \cite{DK}, and \cite{grillakis1} to conclude orbital stability/instability results. Indeed, we need to calculate the second derivative in terms of $c$ (consequently, in terms of $k$) associated with the one parameter function $d(c)=E(\phi)+cF(\phi)$ with the period $L$ depending on the modulus $k\in(0,1)$. The arguments in \cite{DLK} have showed the orbital instability of periodic waves with fixed periods only.\\
\indent As a consequence, since the periodic wave does not have necessarily zero mean, the orbital stability/instability can not be measured by analyzing only the sign of $d''(c)$, even though we have in hands, good spectral properties for the linearized operator $\mathcal{L}$ (namely, one negative eigevalue which is simple and zero being a simple eigenvalue with associated eigenfunction $\phi'$).\\
\indent Despite of the arguments established in \cite{DLK}, we prove that the periodic wave in $(\ref{dnoidal1})$ is orbitally stable in the energy space $H_{per}^1([0,L])$ without assuming that $\phi$ has zero mean. To do so, we employ the recent development in \cite{ANP} which gives a wide approach to deduce orbital stability results regarding periodic waves $\phi$ which are, at the same time, critical points and zero solutions of the modified Lyapunov function 
\be
\mathcal{B}(u)=G(u)-G(\phi)+N(Q(u)-Q(\phi))^2,
\ee
where $N$ is a convenient positive constant and $Q$ is a convenient sum of the quantities $F(\phi)$ and $V(\phi)$. In some sense, our result recovers the orbital stability arguments as in \cite{hakka}.\\
\indent Our paper is organized as follows: next section is used to present the orbital stability of periodic waves associated with the model $(\ref{mCH})$, using the arguments in \cite{ANP}. Some important remarks concerning the orbital instability of periodic waves with zero mean are given in Section 3.

\section{Orbital Stability of Traveling Waves}\label{OSPW1}

In this section, we present our stability results by using an simplification of the arguments in \cite{ANP}. First of all, in order to simplify the notation, the norm and inner product in $L_{per}^2([0,L])$ will be denoted by  $||\cdot||$ and $\langle\cdot,\cdot\rangle$, respectively.

Before stating our main theorem, we need some preliminary results. Let $\rho$ be the semi-distance defined on the energy space $X=H_{per}^1([0,L])$.
\be\label{rho}
\rho(u,\phi)=\inf_{y\in\mathbb{R}}||u-\phi(\cdot+y)||_{X}.
\ee

\begin{definition}\label{defstab}
	We say that a solitary wave solution $\phi$ is orbitally stable in $X$, by the flow of \eqref{mCH},  if for any $\ve>0$ there exists $\delta>0$ such that for any $u_0\in X$ satisfying $\|u_0-\phi\|_X<\delta$, the solution $u(t)$ of \eqref{mCH} with initial data $u_0$ exists globally and satisfies
	$
	\rho(u(t),\phi)<\ve,
	$
	for all $t\geq0$.
\end{definition}
According with the arguments in \cite{hakka}, the Cauchy problem associated with the equation $(\ref{mCH})$ is locally well-posed in $H_{per}^s([0,L])$, for $s>\frac{3}{2}$. On the other hand, it is well known that some blow up results in finite time are expected for the same model (see again \cite{hakka} and references therein). Thus, since $F$ in $(\ref{Fu})$ is a conserved quantity, we can combine the local solution obtained in \cite{hakka} with the standard a priori estimate $F(u(t))=F(u_0)$ for all $t\geq0$ to get a \textit{conditional orbital stability result}.

For a given $\varepsilon>0$, we define the $\varepsilon$-neighborhood of the orbit $O_\phi=\{\phi(\cdot+y), y\in\R\}$ as
\be \label{tube}U_{\varepsilon} := \{u\in X;\ \rho(u,\phi) < \varepsilon\}.\ee

To start our analysis, we first assume the existence of a smooth functional $Q:X\rightarrow \mathbb{R}$ which is conserved quantity in time, invariant by translations in the sense that $Q(u(\cdot+r))=Q(u)$, for all $r\in\mathbb{R}$, and satisfying $\langle Q'(\phi),\phi'\rangle=0$. Functional $Q$ plays an important role in our analysis since it inspires us the definition of the tangent space $\Upsilon_0=\{u\in X;\ \langle Q'(\phi),u\rangle=0\}$. In addition, $Q$ will be considered with a convenient form later on.

Before starting with the stability results, we need to prove some auxiliary results which are useful in our stability analysis. The first one concerns the existence of periodic waves for large periods $L>0$. In some sense, this fact has already presented in the introduction and for the sake of completeness we enunciate a full result of existence of periodic solutions.\\
\begin{lemma}\label{lema12}
	For $L>0$ sufficiently large, there exists $k_{1}\in(0,1)$ such that for all $k\in(0,k_1)$, the periodic traveling wave solution $\phi$ in $\ref{dnoidal1}$ is a solution of $(\ref{ode-wave})$ and depending smoothly on $k\in(0,k_1)$. Parameters $a$, $b$ and $c$ are given respectively by $(\ref{a})$, $(\ref{b})$ and $(\ref{c})$. The value of $A$ in $(\ref{ode-wave})$ can be expressed in terms of $k$ and $L$ by
	\be
	\begin{array}{lllll}A&=&\displaystyle\frac{1}{27L^6}\left[(1280(-1+k^2-k^4)K(k)^4\right.\\\\
	&+&	\displaystyle\left.9L^4)
	\sqrt{2048(-1+k^2-k^4)K(k)^4+9L^4}\right.\\\\
	&+&\displaystyle\left.(-16384-16384k^6+24576k^2+24576k^4)K(k)^6\right.\\\\
	&+&\displaystyle\left. 6912L^2(1-k^2+k^4)K(k)^4-27L^6\right].\end{array}
	\ee
	Moreover, for $k\in(0,k_1)$ one has:\\
	i) the strict inequality $c^2-3c+\frac{32\pi^4}{L^4}<0$ is always satisfied.\\
	ii) $\phi-c<0$ in $[0,L]$.
\end{lemma}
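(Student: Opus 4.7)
The plan is to extract $(a,b,c,A)$ from the algebraic system outlined after $(\ref{powerSN})$ and then to verify (i) and (ii) directly in the regime $L\gg 1$, $k$ near zero, where everything is computable in closed form. For the existence statement, I would substitute $(\ref{dnoidal1})$ into $(\ref{ode-wave})$ and rewrite every term as a polynomial in $w:=\dn^2\bigl(\tfrac{2K(k)x}{L};k\bigr)$ of degree at most three, using $k^2\sn^2=1-w$, $k^2\cn^2=w-(1-k^2)$, and the identity $(\dn')^2=(1-\dn^2)(\dn^2-(1-k^2))$. The coefficient of $w^3$ in the resulting polynomial $f_0+f_1w+f_2w^2+f_3w^3\equiv 0$ reduces to $-b^2\bigl[8(2K/L)^2+b\bigr]$, so $f_3=0$ forces exactly $(\ref{b})$; the remaining equations $f_2=f_1=f_0=0$ are affine in $(a,c,A)$ after substituting $(\ref{b})$ and can be solved sequentially, producing $(\ref{a})$, $(\ref{c})$ and the stated expression for $A$, once the branch of the square root is chosen to make everything smooth at $k=0$. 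The radicand $9L^4-2048K(k)^4(1-k^2+k^4)$ equals $9L^4-128\pi^4$ at $k=0$, so provided $L^4>128\pi^4/9$ it stays strictly positive on some interval $(0,k_1)$; analyticity of $K$ and $E$ then delivers the smooth dependence of $\phi$ on $k\in(0,k_1)$.

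For (i), set $s:=\tfrac{1}{2L^2}\sqrt{9L^4-2048K(k)^4(1-k^2+k^4)}$ so that $c=\tfrac{3}{2}-s$. A direct expansion gives $c(c-3)=s^2-\tfrac{9}{4}=-\tfrac{512}{L^4}K(k)^4(1-k^2+k^4)$, hence
\[
c^2-3c+\frac{32\pi^4}{L^4}=\frac{32}{L^4}\Bigl[\pi^4-16\,K(k)^4(1-k^2+k^4)\Bigr].
\]
The claimed inequality is therefore equivalent to $g(k):=K(k)^4(1-k^2+k^4)>\pi^4/16=g(0)$. Using the hypergeometric expansion $K(k)=\tfrac{\pi}{2}\bigl(1+\tfrac{k^2}{4}+\tfrac{9k^4}{64}+O(k^6)\bigr)$, a short computation yields
\[
g(k)=\frac{\pi^4}{16}+\frac{15\pi^4}{256}\,k^4+O(k^6),
\]
so $g(k)>g(0)$ on $(0,k_1)$ after shrinking $k_1$ if necessary.

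For (ii), since $b(k)<0$ and $\dn^2$ ranges in $[1-k^2,1]$, $\phi$ attains its maximum where $\dn^2=1-k^2$, namely $\phi_{\max}=a+b\bigl(1-k^2-\tfrac{E(k)}{K(k)}\bigr)$. At $k=0$ one has $E(0)/K(0)=1$, so $\phi_{\max}|_{k=0}=a(0)$; inserting $k=0$ into $(\ref{a})$ and $(\ref{c})$ gives
\[
(\phi_{\max}-c)|_{k=0}=-\frac{8\pi^2}{3L^2}-2+\frac{2\sqrt{9L^4-128\pi^4}}{3L^2}.
\]
Since $\sqrt{9L^4-128\pi^4}<3L^2$ whenever $L^4>128\pi^4/9$, the right-hand side is bounded above by $-\tfrac{8\pi^2}{3L^2}<0$; by continuity of $k\mapsto(\phi_{\max}-c)(k)$ the strict negativity persists on $(0,k_1)$, and this controls $\phi-c$ uniformly on $[0,L]$ by periodicity. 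The only real technical subtlety lies in (i): the $k^2$ coefficient of $g(k)-g(0)$ vanishes, so one must push the series for $K$ to order $k^4$ in order to detect the correct sign from the positive leading term $\tfrac{15\pi^4}{256}k^4$.
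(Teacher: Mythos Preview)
Your argument is correct and follows exactly the route the paper itself sketches in the introduction: substitute $(\ref{dnoidal1})$ into $(\ref{ode-wave})$, collect powers of $\dn^2$, and read off $(a,b,c,A)$ from the system $f_i=0$. The paper records no proof of this lemma beyond a closing $\square$, so your verifications of (i) via the expansion $g(k)=\tfrac{\pi^4}{16}+\tfrac{15\pi^4}{256}k^4+O(k^6)$ and of (ii) via the explicit value $(\phi_{\max}-c)|_{k=0}=-\tfrac{8\pi^2}{3L^2}-2+\tfrac{2}{3L^2}\sqrt{9L^4-128\pi^4}<0$ actually supply details the paper leaves to the reader.
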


\begin{flushright}
	$\square$
\end{flushright}

The next result allows us to decide about the quantity and multiplicity of the first two negative eigenvalues of $\mathcal{L}$ in $(\ref{operator})$.\\

\begin{lemma}\label{lema123}
Assume that conditions in Lemma $\ref{lema12}$ are satisfied. The linearized operator $\mathcal{L}$ defined in $(\ref{operator})$ has only one negative eigenvalue which is simple. Zero is a simple eigenvalue whose associated eigenfunction is $\phi'$.
\end{lemma}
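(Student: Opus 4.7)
The plan is to combine $\mathcal{L}\phi'=0$ with a zero count for $\phi'$ and Haupt's periodic oscillation theorem, and then to pin down the remaining structure by passing to the explicitly solvable $k\to 0$ limit. First, differentiating $(\ref{ode-wave})$ in $x$ yields $(\phi-c)\phi'''+2\phi'\phi''+(c-3\phi^2)\phi'=0$, which coincides term by term with $\mathcal{L}\phi'$; hence $\phi'\in\ker\mathcal{L}$. From $(\ref{dnoidal1})$ and the identity $\tfrac{d}{du}\dn^2 u=-2k^2\sn u\cn u\dn u$, the derivative $\phi'$ is a nonzero scalar multiple of $\sn(2Kx/L;k)\cn(2Kx/L;k)\dn(2Kx/L;k)$, and since $\dn>0$ on $\R$ for $k\in(0,1)$ while each of $\sn(2K\cdot/L)$, $\cn(2K\cdot/L)$ has exactly one zero on $[0,L)$, $\phi'$ has exactly two zeros per period.

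Next, $\mathcal{L}$ can be rewritten in self-adjoint Sturm--Liouville form $\mathcal{L}u=\partial_x((\phi-c)\partial_x u)+(c-3\phi^2+\phi'')u$, and Lemma $\ref{lema12}$(ii) gives that $\phi-c$ is of one sign on $[0,L]$, so $\mathcal{L}$ becomes, up to an overall sign, a standard periodic Sturm--Liouville operator on $L^2_{per}([0,L])$ with compact resolvent and eigenvalues $\lambda_0<\lambda_1\le\lambda_2<\lambda_3\le\cdots\to+\infty$. The Haupt--Magnus--Winkler oscillation theorem matches eigenvalue indices with zero counts ($0,2,4,\ldots$ on $[0,L)$), so the two-zero eigenfunction $\phi'$ forces $0\in\{\lambda_1,\lambda_2\}$; in particular $\mathcal{L}$ has at most one negative eigenvalue, which is necessarily simple. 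That this negative eigenvalue actually exists follows from the min--max principle applied to the constant test function $v\equiv 1$: one computes $\langle\mathcal{L}1,1\rangle=cL-3\int_0^L\phi^2\,dx$ and verifies, using $(\ref{a})$--$(\ref{c})$ together with Lemma $\ref{lema12}$(i), that this quadratic form is strictly negative for every $k\in(0,k_1)$. Hence $\lambda_0<0$ and $\mathcal{L}$ has exactly one negative eigenvalue, which is simple.

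Finally, to exclude $\lambda_1=\lambda_2=0$, I view $\mathcal{L}=\mathcal{L}_k$ as an analytic family in $k$ via Rellich--Kato perturbation theory. At $k=0$ the profile degenerates to the constant $\phi\equiv a_0$ and $\mathcal{L}$ reduces to the constant-coefficient operator $\mathcal{L}_0=(a_0-c_0)\partial_x^2+(c_0-3a_0^2)$, whose Fourier spectrum is explicit; from $(\ref{a})$--$(\ref{c})$ at $k=0$ one checks $\lambda_1^0=\lambda_2^0=0$ with eigenspace $\operatorname{span}\{\cos(2\pi x/L),\sin(2\pi x/L)\}$. For $k\in(0,k_1)$ one branch stays pinned at $0$ (with eigenfunction $\phi'$, odd about $x=0$), while the partner branch $\mu(k)$ lies in the even sector and is analytic in $k^2$ with $\mu(0)=0$. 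A Rayleigh--Schr\"odinger expansion, using the leading Fourier coefficient $\phi_k-a_k=\tfrac{1}{2}b_0 k^2\cos(2\pi x/L)+O(k^4)$ obtained from the small-$k$ behaviour of $\dn^2-E/K$, yields $\mu(k)>0$ on $(0,k_1)$, which together with the previous paragraph completes the proof. The main obstacle is this last step: the first-order correction to $\mu$ in $k^2$ vanishes by a Fourier orthogonality, so the expansion must be pushed to second order, and both the explicit formulas $(\ref{a})$--$(\ref{c})$ and the large-$L$/small-$k$ hypothesis of Lemma $\ref{lema12}$ are needed to secure the strict positivity of the leading nontrivial coefficient.
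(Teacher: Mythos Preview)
The paper does not supply its own proof of this lemma; it simply refers to Propositions~2 and~3 of \cite{DLK}. Your argument is therefore an independent proof attempt, and its overall architecture---show $\mathcal{L}\phi'=0$, count the zeros of $\phi'$, invoke the periodic Sturm--Liouville oscillation theorem, and then resolve the possible double zero eigenvalue by a perturbation analysis from $k=0$---is a standard and reasonable route, different from (and more self-contained than) a bare citation.

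Two points, however, are genuine gaps. First, the inference ``$0\in\{\lambda_1,\lambda_2\}$; in particular $\mathcal{L}$ has at most one negative eigenvalue'' is not correct as stated: oscillation theory only places $0$ among $\{\lambda_1,\lambda_2\}$, and the case $\lambda_1<0=\lambda_2$ would give \emph{two} negative eigenvalues. (For the same reason your test-function computation is redundant, since $\lambda_0<\lambda_1\le 0$ already follows from oscillation.) All the work therefore lies in the last paragraph, where you must show that the even branch $\mu(k)$ emanating from the $k=0$ double eigenvalue moves \emph{up} rather than down.

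Second, that crucial step is only sketched. You note correctly that the first-order correction in $k^2$ vanishes and that one must go to second order, and you assert that the resulting coefficient is positive after invoking (\ref{a})--(\ref{c}) and the large-$L$/small-$k$ hypothesis---but you do not carry out the computation, and the entire conclusion (simplicity of the zero eigenvalue \emph{and} uniqueness of the negative one) rests on this unverified sign. Even granting the sign, the expansion controls $\mu(k)$ only near $k=0$; covering all of $(0,k_1)$ requires either taking $k_1$ small (which Lemma~\ref{lema12} may permit, but this should be said) or an additional argument excluding a later zero crossing of $\mu$. As written, the proposal identifies the right obstacles but does not close them.
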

\begin{proof}
	See Propositions $2$ and $3$ in \cite{DLK}.
	
\end{proof}

We are in position to establish the following proposition which gives a sufficient condition for the positiveness of the quadratic form associated with $\mathcal{L}$.

\begin{proposition}\label{prop2}
Suppose that conditions in Lemma $\ref{lema12}$ are satisfied. Assume the existence of $\Phi\in X$ such that $\langle\mathcal{L}\Phi,\varphi\rangle=0$, for all $\varphi\in \Upsilon_0$, and
$$
\mathcal{I}:=\langle\mathcal{L}\Phi,\Phi\rangle<0\ \ \ \ \ \ \mbox{(Vakhitov-Kolokolov condition)}.
$$
Then, there exists a constant $c>0$ such that
$\langle\mathcal{L}v,v\rangle\geq c||v||_{X}^2,$
for all $v\in \Upsilon_0\cap [\phi']^\perp$.
\end{proposition}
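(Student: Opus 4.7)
The approach I would take is the classical Weinstein / Grillakis--Shatah--Strauss scheme for a codimension-two linear constraint, adapted so that the final coercivity is measured in $X=H^{1}_{per}([0,L])$. The starting point is the complete spectral decomposition of $\mathcal{L}$ guaranteed by Lemma 2.2: a unique simple negative eigenvalue $\lambda_{-}<0$ with $L^{2}$-normalized eigenfunction $\chi$, the simple kernel $\mathrm{span}\{\phi'\}$, and the rest of the spectrum bounded below by some $\beta>0$. Writing any $v\in X$ in the $L^{2}$-orthogonal form $v=a\chi+b\phi'+w$ with $w$ orthogonal to $\chi$ and $\phi'$, one has
\[
\langle\mathcal{L}v,v\rangle=\lambda_{-}a^{2}+\langle\mathcal{L}w,w\rangle\geq\lambda_{-}a^{2}+\beta\|w\|_{L^{2}}^{2}.
\]

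Next I would exploit the Vakhitov--Kolokolov hypothesis to extract the nondegeneracy $\langle Q'(\phi),\chi\rangle\neq 0$. Since $\mathcal{L}\Phi$ annihilates the hyperplane $\Upsilon_{0}$, one has $\mathcal{L}\Phi=\gamma\,Q'(\phi)$ for some scalar $\gamma$; the case $\gamma=0$ would force $\Phi\in\ker\mathcal{L}$ and hence $\mathcal{I}=0$, contradicting $\mathcal{I}<0$. After rescaling $\Phi$, we may assume $\mathcal{L}\Phi=Q'(\phi)$ with $\langle\Phi,Q'(\phi)\rangle=\mathcal{I}<0$. Decomposing $\Phi=a_{0}\chi+b_{0}\phi'+w_{0}$ yields $\mathcal{I}=\lambda_{-}a_{0}^{2}+\langle\mathcal{L}w_{0},w_{0}\rangle<0$, which forces $a_{0}\neq 0$; pairing $\mathcal{L}\Phi=Q'(\phi)$ with $\chi$ then gives $\langle Q'(\phi),\chi\rangle=\lambda_{-}a_{0}\neq 0$.

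The core of the proof is to show that
\[
\mu_{*}\;:=\;\inf\{\,\langle\mathcal{L}v,v\rangle\,:\, v\in\Upsilon_{0}\cap[\phi']^{\perp},\;\|v\|_{L^{2}}=1\,\}\;>\;0.
\]
Arguing by contradiction, assume $\mu_{*}\leq 0$; the compact resolvent of $\mathcal{L}$ on $L^{2}_{per}$ delivers a minimizer $v^{*}$ satisfying the Lagrange equation $\mathcal{L}v^{*}-\mu_{*}v^{*}=\beta_{*}Q'(\phi)$ (the multiplier on $\phi'$ vanishes because $\mathcal{L}\phi'=0$ and $\langle Q'(\phi),\phi'\rangle=0$). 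The case $\mu_{*}=\lambda_{-}$ is excluded because Fredholm solvability of $(\mathcal{L}-\lambda_{-})v^{*}=\beta_{*}Q'(\phi)$ demands $\langle Q'(\phi),\chi\rangle=0$, while the case $\mu_{*}=0$ gives $v^{*}=\beta_{*}\Phi+t\phi'$ and, feeding this into $\langle v^{*},Q'(\phi)\rangle=0$, produces $\beta_{*}\mathcal{I}=0$, hence $\beta_{*}=0$ and eventually $v^{*}=0$, contradicting $\|v^{*}\|_{L^{2}}=1$. For $\mu_{*}\in(\lambda_{-},0)$ the operator $\mathcal{L}-\mu_{*}$ is invertible and the constraint $\langle v^{*},Q'(\phi)\rangle=0$ reduces to $f(\mu_{*})=0$, where
\[
f(\mu)\;=\;\bigl\langle (\mathcal{L}-\mu)^{-1}Q'(\phi),Q'(\phi)\bigr\rangle\;=\;\sum_{n}\frac{\langle Q'(\phi),\chi_{n}\rangle^{2}}{\lambda_{n}-\mu}.
\]
The monotonicity $f'(\mu)>0$, the limit $f(\mu)\to-\infty$ as $\mu\to\lambda_{-}^{+}$ (coming from $\langle Q'(\phi),\chi\rangle\neq 0$), and the value $f(0^{-})=\langle\Phi,Q'(\phi)\rangle=\mathcal{I}<0$ together force $f<0$ throughout $(\lambda_{-},0]$, contradicting $f(\mu_{*})=0$. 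Hence $\mu_{*}>0$.

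Finally I would upgrade the bound $\langle\mathcal{L}v,v\rangle\geq\mu_{*}\|v\|_{L^{2}}^{2}$ on $\Upsilon_{0}\cap[\phi']^{\perp}$ to the required $H^{1}$-coercivity. Integration by parts yields
\[
\langle\mathcal{L}v,v\rangle=-\int_{0}^{L}(\phi-c)v_{x}^{2}\,dx-\int_{0}^{L}\phi'\,v\,v_{x}\,dx+\int_{0}^{L}\bigl(c-3\phi^{2}+\tfrac{1}{2}\phi''\bigr)v^{2}\,dx,
\]
and Lemma 2.2(ii) ensures $c-\phi\geq\delta>0$ uniformly on $[0,L]$; Young's inequality on the cross term produces a G\aa{}rding-type bound $\langle\mathcal{L}v,v\rangle\geq\delta'\|v_{x}\|_{L^{2}}^{2}-C'\|v\|_{L^{2}}^{2}$ valid for every $v\in X$, and a convex combination of this bound with the $L^{2}$-coercivity on the constrained set delivers the desired constant $c>0$ with $\langle\mathcal{L}v,v\rangle\geq c\|v\|_{X}^{2}$. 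I expect the main obstacle to be the rigorous analysis of $f(\mu)$ (in particular the careful exclusion of the spectral values $\mu_{*}\in\{\lambda_{-},0\}$) together with the justification that the infimum is actually attained; the latter rests on the Rellich embedding $H^{1}_{per}\hookrightarrow L^{2}_{per}$ to pass to the weak limit while preserving the linear constraints.
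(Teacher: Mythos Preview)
Your argument is correct and follows the classical Weinstein/Grillakis--Shatah--Strauss scheme; this is precisely the route taken in the reference \cite{ANP} that the paper cites in lieu of a proof. One cosmetic remark: in your integration-by-parts identity the term $-\int_{0}^{L}\phi'\,v\,v_{x}\,dx$ actually equals $\tfrac{1}{2}\int_{0}^{L}\phi''\,v^{2}\,dx$, so the cross term is absorbed into the zeroth-order coefficient and the G\aa{}rding bound is even cleaner than you wrote, but this does not affect the conclusion.
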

\begin{proof}
See \cite{ANP}.
\end{proof}

In our context, parameters $c$ and $A$ given by Lemma $\ref{lema12}$ depend on a third parameter $k\in(0,k_1)$. Next result gives us a sufficient condition to obtain a convenient formula for $\mathcal{I}$ in Lemma $\ref{prop2}$ in terms of $k$.

\begin{corollary}\label{coro2}
 Suppose that the assumptions in Lemma $\ref{lema12}$ are satisfied. The quantity $\mathcal{I}$ in Proposition $\ref{prop2}$ is given by:
\begin{equation}\label{solcriterio}
\mathcal{I}=\frac{d A}{d k} \frac{d}{dk}V(\phi)-\frac{d c}{d k}\frac{d}{dk}F(\phi).
\end{equation}
Moreover, $\mathcal{I}<0$.
\end{corollary}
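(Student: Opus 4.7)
The strategy is to identify $\Phi$ with the derivative of the wave profile with respect to the modulus and read off the formula for $\mathcal{I}$ from the resulting spectral identity. Writing $\phi_k:=\partial\phi/\partial k$ and differentiating the profile equation $(\ref{ode-wave})$ in $k$, remembering that both $c=c(k)$ and $A=A(k)$ vary with $k$, a regrouping of all terms involving $\phi_k$, $\phi_k'$, $\phi_k''$ produces the key identity
$$\mathcal{L}\phi_k \;=\; \frac{dA}{dk}\,V'(\phi)\;-\;\frac{dc}{dk}\,F'(\phi),$$
where $V'(\phi)=1$ and $F'(\phi)=\phi-\phi''$ are the $L^2$-gradients of the conservation laws $(\ref{Vu})$ and $(\ref{Fu})$.

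Guided by this identity, I would set $Q(u):=\frac{dA}{dk}V(u)-\frac{dc}{dk}F(u)$, with the coefficients understood as numbers evaluated at the fixed modulus under consideration. This $Q$ is then a conserved, translation-invariant functional, and the compatibility $\langle Q'(\phi),\phi'\rangle=0$ follows automatically from translation invariance; hence $Q$ satisfies the hypotheses required before Proposition $\ref{prop2}$. The identity above reads $\mathcal{L}\phi_k=Q'(\phi)$, so $\mathcal{L}\phi_k$ lies in the one-dimensional annihilator of $\Upsilon_0$ and the choice $\Phi:=\phi_k$ fulfils the hypothesis of the proposition. Consequently
$$\mathcal{I}=\langle\mathcal{L}\phi_k,\phi_k\rangle=\langle Q'(\phi),\phi_k\rangle=\frac{dA}{dk}\,\frac{d}{dk}V(\phi)-\frac{dc}{dk}\,\frac{d}{dk}F(\phi),$$
which is exactly $(\ref{solcriterio})$.

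It remains to show $\mathcal{I}<0$. Because $\phi$ is built from the dnoidal profile $(\ref{dnoidal1})$ with mean $a$, one has $V(\phi)=aL$ and hence $\frac{d}{dk}V(\phi)=L\,a_k$, with $a_k$ computable directly from $(\ref{a})$. The quantity $F(\phi)=\tfrac12\int_0^L(\phi^2+\phi'^2)dx$ reduces, via the classical period-integrals of $\dn^2$, $\dn^4$ and $k^4\sn^2\cn^2\dn^2$, to an explicit function of $k$ and $L$. Combining these with $c_k$ and $A_k$ obtained from $(\ref{c})$ and the formula for $A$ in Lemma $\ref{lema12}$ produces an explicit but lengthy function of $k$ on $(0,k_1)$. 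The main obstacle is controlling its sign; I would handle it by asymptotic expansion as $k\to 0^+$, using $K(k)=\tfrac{\pi}{2}(1+\tfrac{k^2}{4}+O(k^4))$ and $E(k)=\tfrac{\pi}{2}(1-\tfrac{k^2}{4}+O(k^4))$ together with the hypothesis that $L$ is large (so that the square-roots appearing in $(\ref{a})$--$(\ref{c})$ admit a regular binomial expansion), and showing that the leading-order term is strictly negative. Up to shrinking $k_1$ if necessary, this yields $\mathcal{I}<0$ throughout $(0,k_1)$.
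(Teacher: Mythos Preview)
Your derivation of formula $(\ref{solcriterio})$ is correct and coincides with the paper's approach: the paper also takes $\Phi=\frac{d\phi}{dk}$ and $Q(u)=\frac{dA}{dk}V(u)-\frac{dc}{dk}F(u)$ (written there with a typo $M$ for $V$), though it omits the intermediate identity $\mathcal{L}\phi_k=Q'(\phi)$ that you spell out.

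Where you diverge is in the verification of $\mathcal{I}<0$. The paper does not argue analytically at all: it rewrites $\mathcal{I}$ as $L\,\frac{dA}{dk}\frac{da}{dk}-\frac12\frac{dc}{dk}\frac{d}{dk}\!\int_0^L(\phi'^2+\phi^2)\,dx$ and then simply displays numerical plots of this expression over ranges of $(k,L)$ to exhibit negativity. Your proposal --- expand everything near $k=0$ using the small-$k$ asymptotics of $K$ and $E$, show the leading term is strictly negative, and shrink $k_1$ if needed --- is an honest analytic argument, and in that sense is more of a proof than what the paper offers. The trade-off is coverage: your method, as stated, only certifies $\mathcal{I}<0$ on a possibly smaller interval $(0,k_1')\subset(0,k_1)$, whereas the paper's graphical evidence (while not rigorous) suggests the inequality persists over the full range produced by Lemma~$\ref{lema12}$. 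Either way, neither argument is a complete closed-form proof for all $k\in(0,k_1)$.
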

\begin{proof}
In Proposition $\ref{prop2}$ we define $\Phi=\frac{d}{dk}\phi$ and $Q(u)=\frac{d A}{d k}M(u)-\frac{dc}{dk}F(u)$ to get $(\ref{solcriterio})$. Now, we to need check that $\mathcal{I}<0$. Indeed, since $V(\phi)=aL$, we obtain from $(\ref{Fu})$ and $(\ref{solcriterio})$ that
\begin{equation}\label{solcrit1}
\mathcal{I}=\frac{dA}{dk}\frac{da}{dk}L-\frac{1}{2}\frac{dc}{dk}\frac{d}{dk}\left(\int_0^{L}\phi'^2+\phi^2dx\right)
\end{equation}
The right-hand side of $(\ref{solcrit1})$ is a complicated function depending on $k\in(0,k_1)$ and $L>0$ large enough. We can show that $\mathcal{I}<0$ by plotting some pictures.

\begin{figure}[h!]
	\centering
	\begin{subfigure}[t]{0.5\textwidth}
		\centering
		\includegraphics[width=1.0\linewidth]{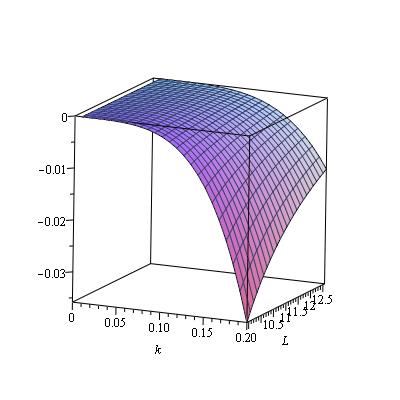}
	\end{subfigure}%
	\begin{subfigure}[t]{0.5\textwidth}
		\centering
		\includegraphics[width=1.0\linewidth]{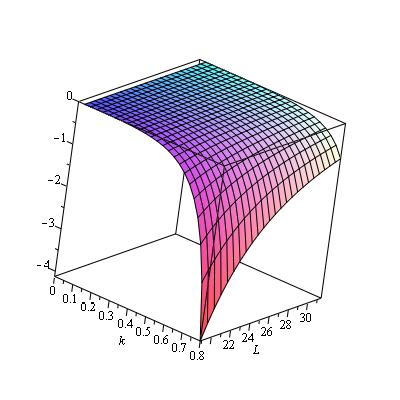}
	\end{subfigure}%
	\caption{Graphics of $\mathcal{I}$ for $(k,L)\in(0,0.2]\times[3\pi,6\pi]$ (left) and for $(k,L)\in(0,0.8]\times[6\pi,10\pi]$ (right).}
\end{figure}

\end{proof}

 Proposition \ref{prop2} and Corollary $\ref{coro2}$ are useful to establish  the following result.

\begin{lemma}\label{lemma1}
 Under assumptions of Proposition $\ref{prop2}$, there exist $N>0$ and $\tau>0$ such that
$$\langle\mathcal{L}v,v\rangle +2N\langle Q'(\phi),v\rangle^{2}\geq \tau||v||_{X}^2,$$
for all $v\in [\phi']^\perp$.
\end{lemma}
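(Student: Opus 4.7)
The plan is to combine Proposition \ref{prop2} (coercivity of $\mathcal{L}$ on the codimension-two space $\Upsilon_0\cap[\phi']^\perp$) with a two-dimensional perturbation argument. First I would observe that the defining property $\langle\mathcal{L}\Phi,\varphi\rangle=0$ for every $\varphi\in\Upsilon_0$ says $\mathcal{L}\Phi$ is orthogonal to the codimension-one subspace $\Upsilon_0$, hence $\mathcal{L}\Phi=\lambda\,Q'(\phi)$ for some $\lambda\in\mathbb{R}$; pairing with $\Phi$ and using $\mathcal{I}=\langle\mathcal{L}\Phi,\Phi\rangle<0$ forces $\langle Q'(\phi),\Phi\rangle\neq 0$. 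With this nondegeneracy in hand, I would then introduce
$$\chi:=\Phi-\frac{\langle\phi',\Phi\rangle}{||\phi'||^2}\phi',$$
which lies in $[\phi']^\perp$ by construction and, thanks to the standing assumption $\langle Q'(\phi),\phi'\rangle=0$, still satisfies $\langle Q'(\phi),\chi\rangle=\langle Q'(\phi),\Phi\rangle\neq 0$.

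Next, given any $v\in[\phi']^\perp$, I would decompose $v=\tilde v+\alpha\chi$ with $\alpha:=\langle Q'(\phi),v\rangle/\langle Q'(\phi),\Phi\rangle$, so that $\tilde v$ automatically belongs to $\Upsilon_0\cap[\phi']^\perp$ and Proposition \ref{prop2} delivers $\langle\mathcal{L}\tilde v,\tilde v\rangle\geq \kappa\,||\tilde v||_X^2$ for some coercivity constant $\kappa>0$. Expanding the quadratic form yields
$$\langle\mathcal{L}v,v\rangle=\langle\mathcal{L}\tilde v,\tilde v\rangle+2\alpha\langle\mathcal{L}\tilde v,\chi\rangle+\alpha^2\langle\mathcal{L}\chi,\chi\rangle.$$
After integrating by parts the second-order term in $\mathcal{L}$, the bilinear form $\langle\mathcal{L}\cdot,\cdot\rangle$ is bounded on $X\times X$ by some constant $M$ depending only on $\phi$, so Young's inequality absorbs the cross term: $|2\alpha\langle\mathcal{L}\tilde v,\chi\rangle|\leq\tfrac{\kappa}{2}||\tilde v||_X^2+C\alpha^2$ for a constant $C>0$ depending on $M$, $||\chi||_X$, and $\kappa$.

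Combining these bounds gives $\langle\mathcal{L}v,v\rangle\geq\tfrac{\kappa}{2}||\tilde v||_X^2+\bigl(\langle\mathcal{L}\chi,\chi\rangle-C\bigr)\alpha^2$. Because the penalty rewrites as $2N\langle Q'(\phi),v\rangle^2=2N\langle Q'(\phi),\Phi\rangle^2\alpha^2$, I would then select $N$ large enough that $2N\langle Q'(\phi),\Phi\rangle^2+\langle\mathcal{L}\chi,\chi\rangle-C\geq 1$, yielding
$$\langle\mathcal{L}v,v\rangle+2N\langle Q'(\phi),v\rangle^2\geq\tfrac{\kappa}{2}||\tilde v||_X^2+\alpha^2.$$
Since $||v||_X^2\leq 2||\tilde v||_X^2+2||\chi||_X^2\alpha^2$, the right-hand side dominates $\tau\,||v||_X^2$ with $\tau:=\min\bigl(\kappa/4,\,1/(2||\chi||_X^2)\bigr)$, which is the claimed inequality. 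The only conceptual step is the very first one, where the Vakhitov-Kolokolov sign is essential to secure $\langle Q'(\phi),\Phi\rangle\neq 0$; everything else is routine bookkeeping with Young's inequality and a sufficiently large choice of $N$.
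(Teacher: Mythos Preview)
Your proof is correct and follows the same overall strategy as the paper's: decompose $v\in[\phi']^\perp$ into a component in $\Upsilon_0\cap[\phi']^\perp$ (where Proposition~\ref{prop2} supplies coercivity) plus a one-dimensional remainder, absorb the cross term with Young's inequality, and then take $N$ large. The one difference is in the choice of transversal direction. You split along $\chi$, the projection of $\Phi$ onto $[\phi']^\perp$, and must reinvoke the Vakhitov--Kolokolov sign $\mathcal{I}<0$ to ensure $\langle Q'(\phi),\chi\rangle\neq 0$ so that the decomposition is well defined. The paper instead splits directly along $w=Q'(\phi)/||Q'(\phi)||$, which already lies in $[\phi']^\perp$ (because $\langle Q'(\phi),\phi'\rangle=0$) and is automatically transversal to $\Upsilon_0$ by the very definition of $\Upsilon_0$. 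As a result the paper's argument is slightly shorter and never revisits the hypothesis $\mathcal{I}<0$---that condition is consumed entirely inside Proposition~\ref{prop2}. Your detour through $\Phi$ is harmless but buys nothing additional here.
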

\begin{proof}
First, since $\langle Q'(\phi),\phi'\rangle=0$, we can write $v\in [\phi']^{\perp}$ as
$$v=\zeta w+z,$$
where $w=\frac{Q'(\phi)}{||Q'(\phi)||}$, $\zeta=\langle v,w\rangle$ and $z\in \Upsilon_0$. Because $z\in\Upsilon_0\cap [\phi']^{\perp}$, Proposition \ref{prop2} implies
\begin{equation}\label{eq01}
\langle\mathcal{L}v,v\rangle \geq \zeta^2\langle\mathcal{L}w,w\rangle + 2\zeta\langle\mathcal{L}w,z\rangle +C||z||_X^2.
\end{equation}
Using Cauchy-Schwartz and Young's inequalities, we have
\begin{equation}\label{eq02}
2\zeta\langle\mathcal{L}w,z\rangle \leq \frac{C}{2}||z||_X^2 + \frac{2\zeta^2}{C}||\mathcal{L}w||_X^2.
\end{equation}
Furthermore, we may choose $N>0$ such that
\begin{equation}\label{eq03}
\langle\mathcal{L}w,w\rangle -\frac{2}{c}||\mathcal{L}w||_X^2 +2N||Q'(\phi)||_X^2\geq \frac{C}{2}.
\end{equation}
We point out that $N$ depends only on $\phi$.
Therefore, using (\ref{eq01}), (\ref{eq02}) and (\ref{eq03}), we conclude
\begin{eqnarray}
\langle\mathcal{L}v,v\rangle +2N\langle Q'(\phi),v\rangle^{2}&=&\langle\mathcal{L}v,v\rangle + 2N\zeta^2||Q'(\phi)||_X^2 \nonumber \\
&\geq&\frac{C}{2}(\zeta^2+||z||_X^2) \nonumber \\
&=& \frac{C}{2}||v||_{X}^2. \nonumber
\end{eqnarray}
The proof is thus completed.
\end{proof}

Let $N>0$ be the constant obtained in the previous lemma. We define the functional $\mathcal{B}:X\rightarrow\R$ as
\begin{equation}\label{functionalV1}
\mathcal{B}(u)=G(u)-G(\phi)+N(Q(u)-Q(\phi))^2,
\end{equation}
where $G$ is the augmented functional defined in (\ref{lyafun}) and $Q$ is the functional defined in Corollary $\ref{coro2}$. It is easy to see from $(\ref{functionalV1})$ and $(\ref{ode-wave})$ that $\mathcal{B}(\phi)=0$ and $\mathcal{B}'(\phi)=0$. In addition, since $G$ is a conserved quantity, $Q$ is also a conserved quantity and the Cauchy problem related to the equation $(\ref{mCH})$ is conditionally global well-posed in the energy space $X$, one has
\begin{equation}\label{boundV}
 \mathcal{B}(u(t))=\mathcal{B}(u_0),\ \ \ \ \ \ \mbox{for all}\ t\geq0.
\end{equation}
Thus, $\mathcal{B}(u(t))$ is finite for large values of $t$.\\

\begin{lemma}\label{lemma2}
There exist $\alpha>0$ and $D>0$ such that 
\begin{equation}\label{eq003}
\mathcal{B}(u)\geq D\rho(u,\phi)^2\end{equation}
for all $u\in U_{\alpha}$
\end{lemma}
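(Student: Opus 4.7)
The plan is to combine the coercivity estimate of Lemma \ref{lemma1} with a Taylor expansion of $\mathcal{B}$ around $\phi$, using the translation symmetry to pass from the tube $U_\alpha$ to perturbations orthogonal to $\phi'$. Since $G$, $V$, and $F$ are all invariant under translations, the same holds for $Q$ and $\mathcal{B}$; hence $\mathcal{B}(u) = \mathcal{B}(u(\cdot+y))$ for every $y \in \mathbb{R}$, and I may work with the translate of $u$ that realises $\rho(u,\phi)$.

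First I would implement the modulation step. For $\alpha > 0$ small and $u \in U_\alpha$, the map $y \mapsto \|u - \phi(\cdot + y)\|_X^2$ is smooth and $L$-periodic, so it attains its minimum at some $y^* = y^*(u)$. Setting $v := u(\cdot + y^*) - \phi$ one has $\|v\|_X = \rho(u,\phi)$ and the first-order condition $\langle v, \phi' \rangle = 0$, that is, $v \in [\phi']^\perp$. A standard implicit-function argument at $u = \phi$ ensures that this choice is uniform on a neighborhood and that $\|v\|_X \leq \rho(u,\phi) \leq C\alpha$.

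Next I would Taylor-expand $\mathcal{B}$ at $\phi$. Because $E$, $F$ and $V$ are polynomial in $u$ and $u_x$ and $H^1_{per}([0,L]) \hookrightarrow L^\infty$, the functional $G$ is of class $C^2$ on $X$, and so is $\mathcal{B}$. Using $\mathcal{B}(\phi) = 0$ and $\mathcal{B}'(\phi) = G'(\phi) = 0$ (the latter from $(\ref{ode-wave})$), and noting that $\mathcal{B}''(\phi)v = \mathcal{L}v + 2N\langle Q'(\phi), v\rangle Q'(\phi)$, I obtain
\begin{equation*}
\mathcal{B}(u) \;=\; \mathcal{B}(\phi + v) \;=\; \tfrac{1}{2}\langle \mathcal{L}v, v\rangle + N\langle Q'(\phi), v\rangle^{2} + R(v),
\end{equation*}
where $R(v) = o(\|v\|_X^2)$ as $\|v\|_X \to 0$, with an estimate $|R(v)| \leq C \|v\|_X^3$ coming from the polynomial structure of $E$.

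Finally I invoke the coercivity of Lemma \ref{lemma1}. Since $v \in [\phi']^\perp$, that lemma yields
\begin{equation*}
\langle \mathcal{L}v, v\rangle + 2N\langle Q'(\phi), v\rangle^{2} \;\geq\; \tau \|v\|_X^2,
\end{equation*}
so $\mathcal{B}(u) \geq \tfrac{\tau}{2}\|v\|_X^2 + R(v)$. Choosing $\alpha > 0$ so small that $|R(v)| \leq \tfrac{\tau}{4}\|v\|_X^2$ whenever $\|v\|_X \leq C\alpha$, we conclude $\mathcal{B}(u) \geq \tfrac{\tau}{4}\|v\|_X^2 = \tfrac{\tau}{4}\rho(u,\phi)^2$ for every $u \in U_\alpha$, which proves the lemma with $D = \tau/4$. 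The main obstacle I anticipate is the modulation step: one must verify that for $\alpha$ sufficiently small the minimiser $y^*$ can be selected smoothly and that the resulting $v$ truly lies in $[\phi']^\perp$ while satisfying $\|v\|_X = \rho(u,\phi)$; once this is in place the rest of the argument is a routine Taylor estimate absorbed by the coercivity.
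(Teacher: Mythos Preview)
Your approach is essentially the same as the paper's: both Taylor-expand $\mathcal{B}$ at $\phi$, invoke the coercivity of Lemma~\ref{lemma1} on $[\phi']^\perp$, and use a modulation argument to pass from the tube $U_\alpha$ to perturbations orthogonal to $\phi'$. The paper carries out the modulation via the implicit function theorem applied to $S(u,r)=\langle u(\cdot-r),\phi'\rangle$ (in $L^2$) and then defers the extension from a ball $B_\alpha(\phi)$ to the full tube $U_\alpha$ to \cite[Lemma~4.1]{bona2}, whereas you minimise $y\mapsto\|u-\phi(\cdot+y)\|_X^2$ directly.

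One point to tighten: minimising in the $X=H^1_{per}$ norm yields the first-order condition $\langle v,\phi'\rangle_X=0$, not $\langle v,\phi'\rangle_{L^2}=0$; but Lemma~\ref{lemma1} (and its proof via Proposition~\ref{prop2}) is stated for the $L^2$-orthogonal complement of $\phi'$. Either switch to the implicit-function construction as in the paper to obtain $L^2$-orthogonality (at the cost that $\|v\|_X$ is only comparable to, not equal to, $\rho(u,\phi)$), or verify that the coercivity of Lemma~\ref{lemma1} persists for $v\perp_X\phi'$. Also, a sign: with your convention the translate should be $v=u(\cdot-y^*)-\phi$, not $u(\cdot+y^*)-\phi$. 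These are routine adjustments; the structure of your argument is correct.
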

\begin{proof}
First, note that from the definition of $\mathcal{B}$ it follows that
$$\langle \mathcal{B}''(u)v,v\rangle=\langle G''(u)v,v\rangle+2N(Q(u)-Q(\phi)) \langle Q''(u)v,v\rangle+2N\langle Q'(u),v\rangle^2,$$
for all $u,v\in X$. In particular, 
$$\langle \mathcal{B}''(\phi)v,v\rangle=\langle \mathcal{L}v,v\rangle+2N\langle Q'(\phi),v\rangle^2.$$
Consequently, from Lemma \ref{lemma1} we get
\begin{equation} \label{eq001}
\langle \mathcal{B}''(\phi)v,v\rangle\geq\tau||v||_X^2,
\end{equation}
for all $v\in (\ker(\mathcal{L}))^\perp$.

On the other hand, a Taylor expansion of $\mathcal{B}$ around $\phi$  reveals that
\begin{equation}\label{eq002}
\mathcal{B}(u)=\mathcal{B}(\phi)+ \langle \mathcal{B}'(\phi),u-\phi\rangle+\frac{1}{2} \langle \mathcal{B}''(\phi)(u-\phi),u-\phi\rangle +h(u),
\end{equation}
where $\lim\limits_{u\to\phi}\frac{|h(u)|}{||u-\phi||_X^2}=0$.
Thus, we can choose $\alpha_1>0$ such that
\begin{equation}\label{limit1}|h(u)|\leq\frac{\tau}{4}||u-\phi||_X^2, \qquad \mbox{for all}  \ u\in B_{\alpha_1}(\phi),\end{equation}
 where $B_{\alpha_1}(\phi)=\left\{u\in X; ||u-\phi||_X <\alpha_1 \right\}$.
 
 Since $\mathcal{B}(\phi)=0$ and $\mathcal{B}'(\phi)=0$, we have from (\ref{eq001}), (\ref{eq002}) and $(\ref{limit1})$ that
 \begin{equation}\label{eq0031}
 \mathcal{B}(u)\geq \frac{\tau}{4}\rho(u,\phi)^2,
 \end{equation}
 for all $u\in B_{\alpha_1}(\phi)$ such that $(u-\phi)\in [\phi']^{\perp}.$\\
 \indent There exists a continuously differentiable map $r:U_{\alpha}\rightarrow\mathbb{R}$, such that  $\langle u(\cdot -r(u)),\phi'\rangle=0$ and $r(\phi)=0$, for all $u\in B_{\alpha}(\phi)$. In fact, let us define the smooth map $S:X\times \mathbb{R}\rightarrow\mathbb{R}$ given by $S(u,r)=\langle u(\cdot-r),\phi'\rangle$. Since $S(\phi,0)=\langle\phi,\phi'\rangle=0$ and 
 $S_r(\phi,0)=-\langle\phi',\phi'\rangle\neq0$, we guarantee, from the implicit function theorem, the existence of $\alpha_2>0$, an $\delta_0>0$ and a unique $C^1-$map $r:B_{\alpha_2}(\phi)\rightarrow(-\delta_0,\delta_0)$ such that $r(\phi)=0$ and $G(u,r(u))=\langle u(\cdot-r(u)),\phi'\rangle=0$, for all $u\in B_{\alpha_2}(\phi)$. From continuity arguments and since $r(\phi)=0$, for a given $0<\varepsilon\leq\min\{\alpha_1,\alpha_2\}$, there exists  $\alpha>0$ small enough (for instance, $0<\alpha\leq \varepsilon$) such that $||u(\cdot-r(u))-\phi||_X<\varepsilon$ with $\langle u(\cdot-r(u)),\phi'\rangle=0$, for all $u\in B_{\alpha}(\phi)$.\\
 \indent From $(\ref{eq0031})$ and the arguments in the last paragraph, we obtain the existence of $D>0$ such that $\mathcal{B}(u)\geq D\rho(u,\phi)^2$, for all $u\in B_{\alpha}(\phi)$. The remainder of the proof follows from similar arguments as in \cite[Lemma 4.1]{bona2}. 
\end{proof}

The above lemma is the key point to prove our main result. Roughly speaking, it says that $\mathcal{B}$ is a suitable Lyapunov function to handle with a our problem. Finally, we present our stability result.

\begin{theorem}\label{teoest11} Suppose that assumptions in Lemma $\ref{lema12}$ hold. Then $\phi$ is orbitally stable in the sense of Definition $\ref{defstab}$.
\end{theorem}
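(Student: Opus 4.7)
The plan is to close the argument with the classical Lyapunov stability scheme: use the modified functional $\mathcal{B}$ defined in \eqref{functionalV1}, whose conservation along the flow is guaranteed by \eqref{boundV}, as the Lyapunov function, and combine the coercivity estimate of Lemma \ref{lemma2} with continuity of $\mathcal{B}$ at $\phi$.

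More precisely, given $\varepsilon>0$, I would first set $\varepsilon':=\min\{\varepsilon,\alpha\}$, where $\alpha>0$ is the radius provided by Lemma \ref{lemma2}. Because the quantities $E$, $F$ and $V$ that build $G$ and $Q$ are continuous on $X=H_{per}^1([0,L])$ (the delicate term $\int_0^L uu_x^2\,dx$ in $E$ is controlled by the Sobolev embedding $H_{per}^1\hookrightarrow L_{per}^\infty$), the map $u\mapsto \mathcal{B}(u)$ is continuous, and since $\mathcal{B}(\phi)=0$ one can select $\delta>0$ such that $\|u_0-\phi\|_X<\delta$ implies $\mathcal{B}(u_0)<D(\varepsilon')^2$, where $D>0$ is the constant from Lemma \ref{lemma2}. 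Assuming the corresponding solution $u(t)$ exists (this is the conditional part, provided by the local theory in \cite{hakka} together with the a priori bound coming from the conservation of $F$), invariance \eqref{boundV} yields
\begin{equation*}
\mathcal{B}(u(t))=\mathcal{B}(u_0)<D(\varepsilon')^2\quad\text{for all }t\geq 0.
\end{equation*}

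To conclude $\rho(u(t),\phi)<\varepsilon$ for all $t\geq 0$, I would argue by contradiction using the continuity of $t\mapsto u(t)$ in $X$ and the translation invariance of $\rho$. If $\rho(u(t),\phi)$ ever reached $\varepsilon'$, there would exist a first time $t^{*}>0$ with $\rho(u(t^{*}),\phi)=\varepsilon'\leq\alpha$, so that $u(t^{*})\in U_{\alpha}$. Applying Lemma \ref{lemma2} at $u(t^{*})$ gives
\begin{equation*}
D(\varepsilon')^2=D\,\rho(u(t^{*}),\phi)^2\leq \mathcal{B}(u(t^{*}))=\mathcal{B}(u_0)<D(\varepsilon')^2,
\end{equation*}
which is absurd. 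Hence $\rho(u(t),\phi)<\varepsilon'\leq\varepsilon$ for every $t\geq 0$, which is precisely the content of Definition \ref{defstab}.

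The bulk of the work has already been done: the spectral input from Lemma \ref{lema123}, the Vakhitov--Kolokolov sign from Corollary \ref{coro2}, the coercivity of $\mathcal{L}$ on $\Upsilon_0\cap[\phi']^{\perp}$ from Proposition \ref{prop2}, and the penalized coercivity of Lemma \ref{lemma1} culminating in the quadratic lower bound \eqref{eq003}. Thus the only genuinely new step here is the contradiction/continuity argument above, and the main subtlety is the \emph{conditional} nature of the conclusion: one must restrict attention to initial data whose local-in-time solution given by \cite{hakka} extends globally (guaranteed whenever the $H^{s}$ norm, $s>3/2$, does not blow up), since the wave-breaking mechanism of the modified Camassa--Holm equation prevents a purely $H_{per}^1$-based global theory. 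Modulo that caveat, the Lyapunov argument closes the proof.
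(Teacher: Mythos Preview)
Your argument is correct and is essentially the same Lyapunov continuation scheme as the paper's proof: continuity of $\mathcal{B}$ at $\phi$ to pick $\delta$, conservation \eqref{boundV}, and the coercivity \eqref{eq003} of Lemma~\ref{lemma2}, closed by a continuity-in-time contradiction (the paper phrases this via a supremum $T_2$ rather than a first hitting time, but the logic is identical). The only cosmetic point is the boundary case $\varepsilon\geq\alpha$, where your choice $\varepsilon'=\alpha$ gives $\rho(u(t^*),\phi)=\alpha$, so $u(t^*)$ lies on $\partial U_\alpha$ rather than in the open set $U_\alpha$ where \eqref{eq003} is stated; taking instead $\varepsilon'=\min\{\varepsilon,\alpha/2\}$ (or extending \eqref{eq003} to the closure by continuity) removes the ambiguity.
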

\begin{proof}
	Let $\alpha>0$ be the constant such that Lemma \ref{lemma2} holds. Since $\mathcal{B}$ is continuous at $\phi$, for a given $\varepsilon>0$, there exists $\delta\in (0,\alpha)$ such that if $||u_0-\phi||_X<\delta$  one has
	\be\label{estepsilon2}
	\mathcal{B}(u_0)\leq |\mathcal{B}(u_0)|=|\mathcal{B}(u_0)-\mathcal{B}(\phi)|<D\varepsilon^2,
	\ee
	where $D>0$ is the constant in Lemma \ref{lemma2} and $C>0$ is a constant to be presented later.\\
	\indent The continuity in time of the function $\rho(u(t),\phi)$ allows to choose $T_1>0$ such that \be\label{subalpha1}\rho(u(t),\phi)<\alpha,\ \ \  \mbox{for all}\ t\in [0,T_1).\ee
	Thus, one obtains $u(t)\in U_{\alpha}$, for all $t\in[0,T_1)$. From Lemma \ref{lemma2}, we have
	\be\label{estepsilon12}
	D\rho(u(t),\phi)^2\leq \mathcal{B}(u(t))=\mathcal{B}(u_0),\ \ \ \ \ \mbox{for all}\ t\in[0,T_1).
	\ee
	\indent Next, we finally prove that $\rho(u(t),\phi)<\alpha$, for all $t\in [0,+\infty)$, from which one concludes the orbital stability. Indeed, let  $T_2>0$ be the supremum of the values of $T_1>0$ for which $(\ref{subalpha1})$ holds. To obtain a contradiction, suppose that $T_2<+\infty$.  By choosing $\varepsilon<\frac{\alpha}{2}$ we obtain, from  $(\ref{estepsilon2})$ and $(\ref{estepsilon12})$ that
	$$
	\rho(u(t),\phi)<\frac{\alpha}{2}, \ \ \ \ \ \mbox{for all}\ t\in[0,T_2).
	$$
	Since $t\in(0,+\infty)\mapsto\rho(u(t),\phi)$ is continuous, there is $T_3>0$ such that
	$\rho(u(t),\phi)<\frac{3}{4}\alpha<\alpha$, for $t\in [0,T_2+T_3)$, contradicting the maximality of $T_2$. Therefore, $T_2=+\infty$ and the theorem  is established.
\end{proof}

\section{Remarks on the Orbital Instability of Periodic Waves with Zero Mean}

\indent In this section, we present some remarks concerning the orbital instability associated with a general Hamiltonian equation given by

\be\label{hamilt}
u_t=JE'(u),
\ee  
where $J$ is a skew-symmetric linear operator, $E$ is the energy functional related to the model and $E'$ represents the Fr\'echet derivative of $E$. We restrict ourselves to the case of the general equation 
\be\label{gCH}
u_t+(p(u))_x-u_{xxt}=\left(q'(u)\frac{u_x^2}{2}+q(u)u_{xx}\right)_x,
\ee
but our arguments can be extended for other equations. Here, $p$ and $q$ are smooth real functions with $p(0)=0$. When $E$ indicates the energy functional associated with the equation $(\ref{gCH})$ and $J=\partial_x(1-\partial_x^2)^{-1}$, particular cases of equation $(\ref{gCH})$ can be expressed as $(\ref{hamilt})$. In particlar, when $p(u)=u^3$ and $q(u)=u$, the general equation $(\ref{gCH})$ reduces to $(\ref{mCH})$ and it is possible to recover $(\ref{hamilt})$ in this specific case.\\
\indent It is well known that the classical instability theory as the one in \cite{grillakis1} can not be applied when $J$ is not one-to-one even though the eventual periodic wave of the form $u(x,t)=\phi(x-ct)$ enjoys the zero mean property (it is clear that $J=\partial_x(1-\partial_x^2)^{-1}$ is not one-to-one since $\ker(J)=[1]$). We believe that the instability analysis in \cite{grillakis1} over periodic Sobolev spaces can be done by restricting \textit{all the analysis} in the closed subspace  
\begin{equation}\label{zero}
Y_0=\Big\{f\in L^2_{\rm per}([0,L]);\ \int_{0}^{L} f(x) dx = 0 \Big\}.
\end{equation}

\indent To get a precise answer for this question, we could suggest the readers a study of a spectral instability result combined with a method where \textit{spectral instability} implies \textit{orbital instability}. As example: for the case of the generalized Korteweg-de Vries and Benjamin-Bona-Mahony equations, we  could assume that the data-solution map $u_0\mapsto u(t)$ is smooth (see \cite{AN1}). We believe that such approach can be done for the general equation $(\ref{gCH})$ without further problems.\\
\indent A convenient spectral stability criterium can be determined. In fact, we first use the perturbation $u(x,t) = \phi(x-ct) + v(x-ct,t)$ in the equation $(\ref{gCH})$ and substituting the associated equation as in (\ref{ode-wave}) for the case of equation $(\ref{gCH})$ (when possible). If everything works fine, we obtain after some calculations the standard spectral problem
\begin{equation}\label{vlinear}
v_t = \partial_x \mathcal{L} v,
\end{equation}
where $\mathcal{L}$ is the (self-adjoint) linearized operator associated with the equation $(\ref{gCH})$ around the wave $\phi$. Since $\phi$ depends only on $x$, a separation of variables in the form $v(x,t) = e^{\lambda t} \eta(x)$ with some $\lambda \in \mathbb{C}$
and $\eta : [0,L] \to \mathbb{C}$ reduces the linear equation (\ref{vlinear})
to the spectral stability problem
\begin{equation}
\label{spectral-stab}
\partial_x \mathcal{L} \eta=\lambda \eta.
\end{equation}
The spectral stability of the periodic wave $\phi$ is defined as follows.

\begin{definition}
	\label{defspe} The smooth periodic wave $\phi$ is said to be spectrally stable
	with respect to perturbations of the same period if
	$\sigma(\partial_x \mathcal{L}) \subset i\mathbb R$ in $L^2_{\rm per}([0,L])$.
	Otherwise, that is, if $\sigma(\partial_x \mathcal{L})$ in $L^2_{\rm per}([0,L])$
	contains a point $\lambda$ with $\mbox{\rm  Re}(\lambda)>0$, the periodic wave $\phi$ is said to be spectrally unstable.
\end{definition}

As we have already mentioned, we know that $\partial_x$ is not a one-to-one operator in periodic context. In order to overcome this difficult, a constrained spectral problem can be considered as
\begin{equation}\label{modspecp1}
\partial_x \mathcal{L}\big|_{Y_0}\eta=\lambda \eta,
\end{equation}
where $\mathcal{L}\big|_{Y_0}$ is
a restriction of $\mathcal{L}$ on the closed subspace $Y_0$ defined in $(\ref{zero})$. In addition, over $Y_0$, the linear operator $\partial_x$ has bounded inverse and this crucial fact could enable us to apply the arguments in \cite{grillakis1} to get orbital instability results (but it is necessary to perform suitable modifications in the mentioned theory). However, we need to analyze the quantity and multiplicity of the restricted linearized operator $\mathcal{L}\big|_{Y_0}$ instead of $\mathcal{L}$.\\
\indent To handle with $\mathcal{L}\big|_{Y_0}$, we need to count the quantity (and multiplicity) of non-positive eigenvalues associated with this restriction operator. If we assume that the kernel of $\mathcal{L}$ is simple and generated by $\phi'$, we can  use the Morse Index Formula (see \cite{Pel-book}) as
\begin{equation}\label{identnegLL}
\left\{ \begin{array}{l}
n(\mathcal{L} \big|_{Y_0}) = n(\mathcal{L}) - n(\langle \mathcal{L}^{-1}1,1\rangle) - z(\langle \mathcal{L}^{-1}1,1\rangle), \\
z(\mathcal{L} \big|_{Y_0}) = z(\mathcal{L}) + z(\langle \mathcal{L}^{-1}1,1\rangle),
\end{array} \right.
\end{equation}
 where $n(\mathcal{A})$ and $z(\mathcal{A})$ indicates, respectively, the number of negative eigenvalues (counting multiplicities) and the dimension of the kernel of a general linear operator $\mathcal{A}$. Since it has been assumed that $z(\mathcal{L})=1$, we have from $(\ref{identnegLL})$ that $z(\mathcal{L} \big|_{Y_0})=1+ z(\langle \mathcal{L}^{-1}1,1\rangle)$. Moreover, if $\langle \mathcal{L}^{-1}1,1\rangle\neq0$, one sees that $z(\mathcal{L} \big|_{Y_0})=1$ and $n(\mathcal{L} \big|_{Y_0}) = n(\mathcal{L}) - n(\langle \mathcal{L}^{-1}1,1\rangle)$.\\
 \indent Let us assume that $F$ in $(\ref{Fu})$ is a conserved quantity associated to $(\ref{gCH})$. In this setting, the main result in \cite{DK} establishes a criterium for the (spectral) orbital stability of periodic waves by using the convenient formula for the Hamltonian Krein Index as
 \be\label{krein}\mathcal{K}_{Ham}=n(\mathcal{L} \big|_{Y_0})-n(D)=n(\mathcal{L}) - n(\langle \mathcal{L}^{-1}1,1\rangle)-n(D).\ee
 The periodic wave is orbitally (spectrally) unstable if $\mathcal{K}_{Ham}=1$ and orbitally (spectrally) stable if $\mathcal{K}_{Ham}=0$. Here, $D$ is the hessian determinant associated with $F(\phi)$ and $V(\phi)$ and it needs to be non-zero. If $\phi$ has fixed period, zero mean and depends smoothly on the wave speed $c$, one has $D=-\frac{1}{2}\frac{d}{dc}\int_0^L\left(\phi'^2+\phi^2\right)dx=-d''(c)$, provided that $\langle \mathcal{L}^{-1}1,1\rangle\neq0$. Therefore, if $n(\mathcal{L})=1$ and 
 $\langle \mathcal{L}^{-1}1,1\rangle>0$, the periodic wave is (spectrally) stable if $d''(c)>0$ and (spectrally) unstable if $d''(c)<0$ by a direct application of $(\ref{krein})$. The last condition is exactly the same as requested in \cite{DLK} to conclude the orbital instability but the periodic wave $\phi$ determined by the authors only has zero mean whether $L$ depends on the modulus $k$. However, we could have $d''(c)<0$ with $\langle \mathcal{L}^{-1}1,1\rangle<0$ and using $(\ref{krein})$, we still have the (spectral) stability. In some particular cases, it is well known that if the Cauchy problem associated with the equation $(\ref{gCH})$ enjoys of a convenient global well-posedness result, the spectral stability implies the orbital stability. Therefore, \textit{in the case of a smooth curve of periodic waves $c\mapsto\phi$ with fixed period and zero mean}, we can not conclude a precise result of orbital instability only with $n(\mathcal{L})=1$, $\ker(\mathcal{L})=[\phi']$ and $d''(c)<0$ (using a combination of \cite{DK} and \cite{grillakis1}) as determined in \cite{DLK}.

\section*{Acknowledgements}

R.H.M. is supported by CAPES/Brazil [regular doctorate fellowship]. F.N. is partially supported by Funda\c{c}\~ao Arauc\'aria [grant number 002/2017] and CNPq/Brazil [grant number 304240/2018-4].

\vspace{1cm}

\end{document}